\documentclass[12pt,reqno]{amsart}
\usepackage{a4wide}
\usepackage[T1]{fontenc}
\usepackage{amssymb,amsmath,amsthm,mathtools}
\usepackage{mathrsfs} 
\usepackage{enumitem}
\usepackage[dvipsnames]{xcolor}
\usepackage{graphicx}
\usepackage{hyperref}
\usepackage{cleveref} 
\hypersetup{colorlinks=true, linkcolor=blue, citecolor=teal, urlcolor=cyan}
\usepackage{comment}
\usepackage{amsrefs} 

\newtheorem{theorem}{Theorem}[section]

\newtheorem*{claim*}{Claim}

\newcounter{maintheorem}

\theoremstyle{remark}

\theoremstyle{definition}

\newtheorem{example}[theorem]{Example}
\numberwithin{equation}{section}
\makeatother

\newcommand{\R}{\mathbb{R}}
\newcommand{\ZZ}{\mathbb{Z}}
\newcommand{\N}{\mathbb{N}}
\newcommand{\e}{\varepsilon}

\newcommand{\nn}[1]{{\left\vert\kern-0.25ex\left\vert\kern-0.25ex\left\vert #1 
\right\vert\kern-0.25ex\right\vert\kern-0.25ex\right\vert}}

\renewcommand{\leq}{\leqslant}
\renewcommand{\geq}{\geqslant}
\renewcommand{\tilde}{\widetilde}

\newcommand{\di}{\mathsf{d}}
\newcommand{\de}{\mathrm{d}}

\DeclareMathOperator{\lip}{lip}
\DeclareMathOperator{\Lip}{Lip}

\DeclareMathOperator{\pen}{pen}





\setlength {\marginparwidth}{2cm}
\usepackage[colorinlistoftodos,textsize=tiny]{todonotes}

\usepackage{bclogo}

\title[Global Lipschitz extension preserving the slope]{Global Lipschitz extension preserving the slope}

\author[N.~De Ponti]{Nicolò De Ponti}
\address[N.~De Ponti]{Politecnico di Milano, Dipartimento di Matematica, Piazza Leonardo da Vinci 32, 20133 Milano, Italy \newline
\href{https://orcid.org/0000-0001-7951-5987}{\texttt{ORCID:0000-0001-7951-5987}}}
\email{nicolo.deponti@polimi.it}

\author[J.~Somaglia]{Jacopo Somaglia}
\address[J.~Somaglia]{Politecnico di Milano, Dipartimento di Matematica, Piazza Leonardo da Vinci 32, 20133 Milano, Italy \newline
\href{https://orcid.org/0000-0003-0320-3025}{\texttt{ORCID:0000-0003-0320-3025}}}
\email{jacopo.somaglia@polimi.it}

\subjclass[2020]{26A16, 30LXX, 53C23}
\keywords{McShane extension, slope,  Lipschitz functions, descending slope, ascending slope}


\begin{document}
\begin{abstract}
We show that every real-valued Lipschitz function on a subset of a metric space can be extended to the whole space while preserving the slope and, up to a small error, the global Lipschitz constant. This answers a question posed by Di Marino, Gigli, and Pratelli, who established the analogous property for the asymptotic Lipschitz constant. We also prove the same result for the ascending slope and for the descending slope.
\end{abstract}
\maketitle

\section{Introduction}
Given a metric space $(X,\di)$, a subset $C\subset X$, and a $L$-Lipschitz function $g:C\to \R$, a classical result due to McShane \cite{McShane} (see also \cite{W34}) establishes that the function
\begin{equation}\label{eq: McS}
f:X\to \R, \qquad f(y)\coloneqq \inf_{x\in C} \{g(x)+L\di(x,y)\}    
\end{equation}
is a $L$-Lipschitz function that extends $g$ to the whole $X$. 

Beyond preserving the global Lipschitzianity, a natural question to explore is whether the extension is possible while also preserving other properties of the function $g$. We refer to \cites{CG,DeBeVe,DGP,Gu25,Milman98}, and references therein, for different results in this direction. In particular, a recent result of Di Marino, Gigli, and Pratelli \cite{DGP} ensures that this is the case if one is interested in preserving the \textit{asymptotic Lipschitz constant} defined as 
$$ \lip_a(f,x)=\begin{cases} 0 &\mbox{ if } x \mbox{ is isolated, }\\  
\displaystyle \limsup_{y,z\to x}\frac{|f(y)-f(z)|}{\di(y,z)} &\mbox{ otherwise. }
\end{cases}
$$
More precisely, they proved that for every $\e>0$ a $L$-Lipschitz function $g:C\to\R$ can always be extended to a $(L+\e)$-Lipschitz function $f:X\to \R$ such that $\lip_a(g,x)=\lip_a(f,x)$ for every $x\in C$, and they noticed that the $\e$-dependence of the global Lipschitz constant cannot in general be removed. In the same article \cite{DGP}*{Remark c)}, the authors ask if the result remains true by replacing the asymptotic Lipschitz constant with the \textit{slope} (also known as \textit{local Lipschitz constant}) defined as
$$ \lip(f,x)=\begin{cases} 0 &\mbox{ if } x \mbox{ is isolated, }\\  
\displaystyle \limsup_{y\to x}\frac{|f(y)-f(x)|}{\di(y,x)} &\mbox{ otherwise. }
\end{cases}
$$

The aim of the paper is to answer the question of Di Marino, Gigli, and Pratelli by proving the following Theorem:

\begin{theorem}\label{t: mainthm}
    Let $(X,\di)$ be a metric space, $C\subset X$ a subset, and $g\colon C\to\R$ a $L$-Lipschitz function. Then, for every $\e>0$ there exists a $(L+\e)$-Lipschitz function $f\colon X\to \R$ whose restriction to $C$ coincides with $g$ and such that 
    \begin{equation*}
        \lip(g,x)=\lip(f,x)\,\,\,\, \mbox{ for every } x\in C.
    \end{equation*}
    Moreover if $g$ is bounded (resp. with bounded support), then $f$ can be chosen to be bounded (resp. with bounded support).
\end{theorem}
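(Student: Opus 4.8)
The first thing I would do is reduce to a *pointwise* extension problem. Fix $\e>0$. For each $x\in C$ the slope $\lip(g,x)=:\lambda_x\le L$ is known, and the local information I must preserve is: for every $\delta>0$ the function $g$ on $C$ behaves, near $x$, like a $(\lambda_x+\delta)$-Lipschitz function on $B(x,r_\delta)\cap C$ for a suitable radius $r_\delta=r_\delta(x)>0$. Equivalently, fix a decreasing sequence $\delta_n\downarrow 0$ and radii $r_n(x)\downarrow 0$ so that $|g(y)-g(x)|\le(\lambda_x+\delta_n)\di(x,y)$ whenever $y\in C$, $\di(x,y)\le r_n(x)$. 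The goal is to build $f$ so that (a) $f$ is globally $(L+\e)$-Lipschitz, (b) $f=g$ on $C$, and (c) for each $x\in C$ and each $n$, $f$ restricted to a small ball around $x$ is $(\lambda_x+2\delta_n)$-Lipschitz at $x$ — this simultaneously forces $\lip(f,x)\le\lambda_x$, while the reverse inequality $\lip(f,x)\ge\lip(g,x)$ is automatic since $f$ extends $g$ and the $\limsup$ can be taken along sequences in $C$ (using that non-isolated points of $C$ in $X$ are handled correctly; isolated points in $X$ are trivial, and here a short lemma is needed to reconcile "isolated in $C$" versus "isolated in $X$").

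The construction I would use is a *localized McShane / partition-of-unity* gluing. Cover $X\setminus C$ (or a neighborhood of $C$ together with the far region) and build, on scales $r_n$, penalized infimal-convolution formulas
\[
f_n(y)\;=\;\inf_{x\in C}\Bigl\{\,g(x)+(\lambda_x+\delta_n)\,\di(x,y)+\pen_n(x,y)\,\Bigr\},
\]
where the penalty $\pen_n(x,y)$ vanishes when $\di(x,y)\le r_n(x)$ and grows (at rate controlled by $L+\e$) so as to switch off the contribution of far-away centers; this is the analogue, for the slope, of the McShane formula \eqref{eq: McS} but with the *local* slopes $\lambda_x$ used near each center and the global constant $L$ used far away. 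One then patches the $f_n$ together over a locally finite cover of $C$ by balls of radius comparable to $r_n$, using a Lipschitz partition of unity subordinate to this cover (metric spaces are paracompact and admit Lipschitz partitions of unity at controlled scales). Away from $C$, after finitely many scales the construction should stabilize to the plain McShane extension with constant $L+\e$, which also gives the "bounded" and "bounded support" addenda: truncating $f$ at $\pm\sup|g|$ does not change the slope on $C$, and for bounded support one additionally multiplies by a cutoff that is $1$ on a neighborhood of $\supp g$ and $0$ far away, chosen Lipschitz with small constant on the transition region.

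The verification splits into three routine-but-careful steps. (i) *$f=g$ on $C$*: for $y=x_0\in C$ the term $x=x_0$ gives value $g(x_0)$, and every other admissible term is $\ge g(x_0)$ because $g$ is $L$-Lipschitz and the penalty never undercuts $L\di$; the partition of unity, being a convex combination, preserves this. (ii) *Global $(L+\e)$-Lipschitz*: each $f_n$ is an infimum of $(L+\e)$-Lipschitz functions (the local-slope term is $\le L$-Lipschitz in $y$, the penalty adds at most $\e$), and a convex combination $\sum\theta_i f_i$ of $K$-Lipschitz functions whose pairwise differences are controlled on the overlaps is again $\approx K$-Lipschitz — here one must check the "error from switching charts" is absorbed into $\e$ by shrinking $r_n$ enough relative to the Lipschitz constants of the $\theta_i$. (iii) *Slope preserved*: near $x\in C$, only centers $x'\in C$ with $\di(x',x)$ small are active, and for those $\lambda_{x'}\le\lambda_x+o(1)$ by upper semicontinuity-type control of the slope along $C$ combined with $g$ being $L$-Lipschitz — so $f$ is locally $(\lambda_x+o(1))$-Lipschitz near $x$, giving $\lip(f,x)\le\lambda_x$.

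The main obstacle is step (ii) on the overlaps, precisely quantifying how small the radii $r_n(x)$ and the cover must be so that the partition-of-unity gluing does not inflate the global Lipschitz constant beyond $L+\e$: the derivative of $\sum_i\theta_i f_i$ picks up a term $\sum_i(\nabla\theta_i)f_i$ which is dangerous unless the $f_i$ agree to high order on the overlap, forcing a delicate bookkeeping of "how much do the local McShane pieces at scale $r_n$ and scale $r_{n+1}$ differ." A secondary subtlety is making the whole scheme work with $\lambda_x$ possibly equal to $0$ for some $x$ (flat points), and handling points of $C$ that are isolated in $C$ but not in $X$ — these must still receive the correct slope from the ambient construction, which is why the penalty is anchored to the *ambient* metric rather than the induced metric on $C$.
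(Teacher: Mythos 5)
The decisive gap is in your verification step (iii), which carries the whole weight of slope preservation. First, the ingredient you invoke there is false: the slope $x\mapsto\lip(g,x)$ is not upper semicontinuous (this is precisely why one distinguishes $\lip$ from its upper semicontinuous envelope $\lip^{*}$), so for nearby active centers $x'$ you cannot assert $\lambda_{x'}\le\lambda_x+o(1)$. Second, the conclusion you aim for, namely that $f$ is locally $(\lambda_x+o(1))$-Lipschitz on a neighborhood of $x$, is a statement about the asymptotic Lipschitz constant $\lip_a(f,x)$ and is impossible in general: since $f$ extends $g$, one has $\lip_a(f,x)\ge\lip_a(g,x)$, and $\lip_a(g,x)$ can strictly exceed $\lip(g,x)$ (see Example \ref{ex: lip differt to lipa}). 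So the only thing that can be proved, and the only thing needed, is the estimate anchored at $x$, i.e.\ a bound on $|f(y)-f(x)|/\di(x,y)$ for $y\to x$. The hard half of that anchored estimate is the lower bound on $f(y)$ in the case $f(y)<f(x)$: there the infimum defining $f(y)$ is nearly attained at some center $z\ne x$, and one must show that $\bigl(g(x)-g(z)-\mathrm{penalty}\bigr)/\di(x,y)$ does not exceed $\lip(g,x)+o(1)$. Smallness of the local slopes $\lambda_{z}$ of the active centers gives no control here; what is needed is to compare $g(z)$ with $g(x)$ through a quantity anchored at $x$ (of the type $\Lip(g,C\cap B_r(x),x)$) together with a quantitative interplay between the growth of the penalty at $z$ and the relative sizes of $\di(x,z)$, $\di(z,y)$, $\di(x,y)$. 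Your plan contains no mechanism for this, and it is exactly the content of the paper's Steps 4--6 (localization of the infimum, the piecewise-linear minorants $\psi_{\bar x,z}$, and the two-case estimate on the incremental ratio).

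A secondary, more structural concern is the multi-scale partition-of-unity gluing itself: near $C$ the radii $r_n(x)$ shrink to $0$ nonuniformly in $x$, so infinitely many scales accumulate at $C$, and keeping the glued function continuous up to $C$, equal to $g$ there, and globally $(L+\e)$-Lipschitz (your step (ii)) would require exactly the kind of delicate bookkeeping you flag as the main obstacle, with no indication of how the chart-switching errors are made summable. The paper sidesteps all of this with a single inf-convolution $f(y)=\inf_{x\in C}\{g(x)+\pen_x(\di(x,y))\}$, where $\pen_x$ is convex and has derivative $\Lip(g,C\cap B_{\e_k}(x),x)+3L\e_{k-2}/\e_{k-1}$ on the scale band $(\e_{k-2},\e_{k-1})$; this is automatically $(L+\e)$-Lipschitz and an extension, so the entire difficulty is concentrated in the anchored slope estimate described above. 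As it stands, your proposal does not contain the idea needed for that estimate.
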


We remark that the $\e$-dependence is essential also in our situation, see Example \ref{ex: e-dependence}. The construction of the desired function $f$ recalls the one performed in \cite{DGP} and corresponds to a \emph{non-linear} version of the McShane's extension \eqref{eq: McS}. That is, we put  
\begin{equation}\label{eq: f in intro}
    f(y)\coloneqq \inf_{x\in C} \{g(x) + \pen_x ( \di(x,y)) \}
\end{equation}
where $\pen_x:[0,\infty)\to [0,\infty)$ is a suitable \textit{penalization} function whose derivative close to $0$ approximates the value $\lip(g,x)$ (see the first part of the proof for the precise definition). By closely reasoning as in \cite{DGP}, in the first three steps of the proof of our main result we show that the above function $f$ is indeed a $(L+\e)$-Lipschitz extension of $g$, while the fourth step proves that the infimum in \eqref{eq: f in intro} can be “localized” in a precise sense. The last two steps are, instead, entirely new and deviate from the arguments of \cite{DGP}, allowing us to prove that $f$ preserves the slope. For every $x\in C$, the idea is to bound from below all the functions $y\mapsto g(x) + \pen_x ( \di(x,y))$ with some suitable “piecewise-linear” functions, defined in \eqref{def: psi}, with a controlled slope. In the final step, we then show that this control is sufficient to provide a valid estimate of the incremental ratio $|f(x)-f(y)|/\di(x,y)$, $x\in C, y\in X$, concluding the proof.

\medskip

In order to have a quite complete picture of the extension theorems preserving the various concepts of slope, we also obtain the analogous result of Theorem \ref{t: mainthm} for the \textit{ascending slope} $\lip^+$ and the \textit{descending slope} $\lip^-$. For a function $f:X\to \R$ and a point $x\in X$, these quantities are defined as
\begin{equation*} \lip^{\pm}(f,x)=\begin{cases} 0 &\mbox{ if } x \mbox{ is isolated, }\\  
\displaystyle \limsup_{y\to x}\frac{(f(y)-f(x))_\pm}{\di(x,y)} &\mbox{ otherwise,}
\end{cases}
\end{equation*}
where by $h_+$ (resp. $h_-$) we denote the positive part (resp. the negative part) of a function $h\colon X\to \R$.

\begin{theorem}\label{t: +- slope}
    Let $(X,\di)$ be a metric space, $C\subset X$ a subset, and $g\colon C\to\R$ a $L$-Lipschitz function. Then, for every $\e>0$ there exists a $(L+\e)$-Lipschitz function $f\colon X\to \R$ whose restriction to $C$ coincides with $g$ and such that 
    \begin{equation*}
        \lip^{\pm}(g,x)=\lip^{\pm}(f,x)\,\,\,\, \mbox{ for every } x\in C.
    \end{equation*}
    Moreover if $g$ is bounded (resp. with bounded support), then $f$ can be chosen to be bounded (resp. with bounded support).
\end{theorem}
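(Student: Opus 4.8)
The plan is to mirror the proof of \Cref{t: mainthm} with the one-sided penalization construction, exploiting the fact that the ascending and descending slopes are "half" of the full slope and hence only require controlling one direction of the incremental ratio. For the \emph{descending slope}, I would use the usual McShane-type infimum extension
\[
f(y)\coloneqq \inf_{x\in C}\bigl\{g(x)+\pen_x(\di(x,y))\bigr\},
\]
with penalization functions $\pen_x$ whose right derivative at $0$ approximates $\lip^{-}(g,x)$ rather than $\lip(g,x)$. The key observation is that in this infimum construction, for $x\in C$ the value $f(y)$ can only drop below $g(x)$ (never rise above the McShane value), so the incremental ratio $(f(x)-f(y))_{+}/\di(x,y)$ is precisely the quantity one controls via the piecewise-linear lower bounds $\psi$ from \eqref{def: psi}; this gives $\lip^{-}(f,x)\le\lip^{-}(g,x)$. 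The reverse inequality $\lip^{-}(f,x)\ge\lip^{-}(g,x)$ is immediate (and direction-preserving) because $f$ extends $g$: approaching $x$ along points of $C$ realizing the descending slope of $g$ already forces $\lip^{-}(f,x)\ge\lip^{-}(g,x)$. For the \emph{ascending slope}, I would instead take the \emph{supremum} extension $f(y)\coloneqq \sup_{x\in C}\{g(x)-\pen_x(\di(x,y))\}$ — equivalently, apply the descending-slope result to $-g$ and negate, since $\lip^{+}(-g,x)=\lip^{-}(g,x)$ and negation swaps $h_{+}$ and $h_{-}$ — so that only the upward incremental ratio needs to be tracked.

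Concretely, the steps I would carry out are: \textbf{(1)} Reduce the ascending-slope case to the descending-slope case via $g\mapsto -g$, observing that this preserves $L$-Lipschitzianity, boundedness, and bounded support. \textbf{(2)} For the descending slope, define, for each $x\in C$, the penalization $\pen_x$ exactly as in the proof of \Cref{t: mainthm} but with $\lip^{-}(g,x)$ playing the role of $\lip(g,x)$ (so that $\pen_x$ is concave near $0$ with right-derivative at $0$ equal to $\min\{\lip^{-}(g,x)+\delta, L\}$ for a suitable small parameter depending on $\e$, and grows linearly with slope $L+\e$ at infinity), and set $f$ to be the corresponding infimum. \textbf{(3)} Re-run Steps 1–4 of the proof of \Cref{t: mainthm} verbatim: these only use the defining inequality $\pen_x'(0^{+})\le L$ and the growth at infinity, not the specific value of the slope, so $f$ is a bounded/bounded-support-preserving $(L+\e)$-Lipschitz extension of $g$ for which the infimum localizes. \textbf{(4)} For the upper bound $\lip^{-}(f,x)\le\lip^{-}(g,x)$: fix $x_0\in C$ and a sequence $y_n\to x_0$. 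Using the localization (Step 4 of \Cref{t: mainthm}) we may assume $f(y_n)=g(x_n)+\pen_{x_n}(\di(x_n,y_n))$ with $x_n$ close to $x_0$; then bound $f(x_0)\le g(x_n)+\pen_{x_n}(\di(x_n,x_0))$ and combine with the piecewise-linear lower estimate $\psi_{x_n}$ to show $(f(x_0)-f(y_n))_{+}\le(\lip^{-}(g,x_0)+\e')\di(x_0,y_n)$ for large $n$. \textbf{(5)} For the lower bound, test the incremental ratio of $f$ at $x_0$ along points of $C$ that realize $\lip^{-}(g,x_0)$, using $f|_C=g$.

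I expect the main obstacle to be Step 4, the upper bound on the descending slope of $f$ — this is the exact analogue of the "last two steps" of \Cref{t: mainthm} that the authors flag as entirely new. The subtlety is that when one bounds $(f(x_0)-f(y_n))_{+}$, the near-minimizer $x_n$ for $y_n$ need not coincide with $x_0$, so one is comparing a piecewise-linear lower bound built from $\lip^{-}(g,x_n)$ against a descending increment anchored at $x_0$; controlling the discrepancy requires using that $\lip^{-}(g,\cdot)$ is, if not continuous, at least usable through the uniform closeness $\di(x_n,x_0)\to 0$ together with the $L$-Lipschitz bound that caps every relevant slope by $L$. One clean way to finesse this is to note that the piecewise-linear functions $\psi$ from \eqref{def: psi} are designed precisely so that their slope near the base point is controlled by $\lip^{-}(g,x_n)+\e'$, and that $g(x_n)\ge g(x_0)-L\di(x_0,x_n)$ together with $\di(x_0,x_n)$ being of smaller order than $\di(x_0,y_n)$ (which follows from the localization radius shrinking) makes the $x_n$-vs-$x_0$ error negligible in the limit. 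The remaining steps are routine adaptations: Steps 1–3 are direct transcriptions from \Cref{t: mainthm}, and Step 5 is a one-line consequence of $f$ being an extension.
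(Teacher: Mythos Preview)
Your reduction of the ascending-slope case to the descending-slope case via $g\mapsto -g$ is correct and matches the paper. The gap is in your treatment of the descending slope: the infimum extension $f(y)=\inf_{x\in C}\{g(x)+\pen_x(\di(x,y))\}$ with $\pen_x$ built from $\lip^{-}(g,x)$ is \emph{not} an extension of $g$. Your claim that Steps~1--4 of Theorem~\ref{t: mainthm} ``only use $\pen_x'(0^+)\le L$ and the growth at infinity'' is false. Step~2 (hence Step~3) needs $g(x)+\pen_x(\di(x,y))\ge g(y)$ for all $x,y\in C$, i.e.\ $\pen_x$ must dominate the \emph{ascending} increment $g(y)-g(x)$; but $S_k^-(x)$ only controls $(g(y)-g(x))_-$. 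Concretely, take $C=X=[0,1]$, $g(t)=t$: then $\lip^-(g,0)=0$, so $\pen_0$ is nearly flat near $0$, and $\phi_0(y)=\pen_0(y)<y=g(y)$ for small $y>0$, giving $f(y)<g(y)$. The same mismatch also breaks your Step~4: the inequality $\varphi_z'(t)\ge \frac{g(\bar x)-g(z)}{\di(\bar x,z)}+3L\frac{\e_{j-2}}{\e_{j-1}}$ from Step~5 of Theorem~\ref{t: mainthm} uses $S_k^c(z)\ge \frac{g(\bar x)-g(z)}{\di(\bar x,z)}$ when $g(\bar x)>g(z)$, and this fails for $S_k^-(z)$.

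The fix, which is what the paper does, is to swap to the \emph{supremum} extension $f(y)=\sup_{x\in C}\{g(x)-\pen_x(\di(x,y))\}$ with $\pen_x$ built from $S_k^-(x)$. Now the extension property needs $g(x)-\pen_x(\di(x,y))\le g(y)$, i.e.\ $g(x)-g(y)\le\pen_x(\di(x,y))$, and this \emph{is} exactly what $S_k^-(x)$ controls. As a bonus, the slope bound becomes trivial and the entire $\psi$-machinery is unnecessary: since $f(y)\ge \phi_{\bar x}(y)=g(\bar x)-\pen_{\bar x}(\di(\bar x,y))$, one gets directly
\[
\frac{(f(y)-f(\bar x))_-}{\di(\bar x,y)}\le\frac{\pen_{\bar x}(\di(\bar x,y))}{\di(\bar x,y)}\le S_{k}^-(\bar x)+3L\frac{\e_{k-2}}{\e_{k-1}}\xrightarrow[y\to\bar x]{}\lip^-(g,\bar x),
\]
with no need to localize the minimizer or compare slopes at nearby points. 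So the proof here is in fact \emph{shorter} than that of Theorem~\ref{t: mainthm}, not a rerun of its hard steps.
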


Here, the extension function $f$ is constructed similarly to \eqref{eq: f in intro}, with a natural change in the penalization term. The proof then follows almost straightforwardly, without relying on the refined estimate required for Theorem \ref{t: mainthm}.

\medskip 

The notion of (ascending/descending) slope is involved in the definition of several quantities in the theory of metric spaces and gradient flows \cites{Cheeger,AGS}, and Theorems \ref{t: mainthm} and \ref{t: +- slope} serve as a technical tool in various possible situations where one is interested in extending functions while controlling the local differential behavior. To cite an application in the context of metric Sobolev spaces, we mention that a possible way (see, e.g., \cite{AGScalc}) to characterize $H^{1,2}$-functions over a metric measure space $(X,\di,\mathfrak{m})$ is by considering $L^2(X,\mathfrak{m})$ functions with finite Cheeger energy, the latter being defined as the $L^2$-relaxation of the pre-Cheeger energy
\[\mathsf{pCh}(f)\coloneqq\int_X\lip^2(f,x)\de\mathfrak{m},\,\qquad f\in \Lip_{bs}(X,\di),\]
$\Lip_{bs}(X,\di)$ being the space of Lipschitz functions $f:X\to \R$ with bounded support. 
Our main result can then be used to show, in a direct and simple way, that the resulting Sobolev space is invariant under isomorphisms of metric measure spaces, without relying on its non-trivial equivalence with the definition based on test plans, as discussed in \cite{AGScalc, AGSiber}. We refer to \cite{DGP}*{Theorem 3.1} for all the details (see also \cite{KLR}*{Corollary 2.4} for the case of $\mathrm{BV}$-functions), where the corresponding result is proved for the Sobolev space defined starting from the integration of the asymptotic Lipschitz constant.

\section{Preliminaries and applications}

Let $(X,\di)$ be a metric space. We will denote by $B_r(x)$ the open ball of radius $r>0$ and center $x\in X$. For a nonempty set $C\subset X$, a function $g\colon C\to \R$, a point $x\in C$ and a set $A\subset C$, we denote by

\begin{equation*}
\begin{split}
    \Lip(g,A,x)&\coloneqq\inf\{\ell\geq 0\colon |g(z)-g(x)|\leq \ell \di(x,z) \ \ \forall z\in A\},\\
\Lip^{\pm}(g,A,x)&\coloneqq\inf\{\ell\geq 0\colon (g(z)-g(x))_{\pm}\leq \ell \di(x,z) \ \ \forall z\in A\},
\end{split}
\end{equation*}
and 
\begin{equation*}
    \Lip(g,A):=\inf\{\ell\geq 0\colon |g(z_1)-g(z_2)|\leq \ell \di(z_1,z_2) \ \ \forall z_1,z_2\in A\}.
\end{equation*}

We recall the definitions of local Lipschitz constants we are interested in. The quantity
\begin{equation*}
    \lip_a(g,x):=\inf_{r>0}\Lip(g,C\cap B_r(x))=\lim_{r\to 0}\Lip (g,C\cap B_r(x))
\end{equation*}
will be called \textit{asymptotic Lipschitz constant} of $g$ at $x$, while
\begin{equation*}
    \lip(g,x):=\inf_{r>0}\Lip(g,C\cap B_r(x),x),
\end{equation*}
will be called \textit{slope} of $g$ at $x$, and
\begin{equation*}
    \lip^{+}(g,x):=\inf_{r>0}\Lip^{+}(g,C\cap B_r(x),x), \quad  \lip^{-}(g,x):=\inf_{r>0}\Lip^{-}(g,C\cap B_r(x),x)
\end{equation*}
are the \textit{ascending slope} and the \textit{descending slope} of $g$ at $x$, respectively.
Clearly, all these quantities depend on the domain of the function $g$. 

Notice that if $x\in C$ is not an isolated point, it holds that
\begin{equation*}
    \lip(g,x)=\limsup_{y\to x}\frac{|f(y)-f(x)|}{\di(y,x)}, \qquad \lip^{\pm}(g,x)=\limsup_{y\to x}\frac{(f(y)-f(x))_\pm}{\di(x,y)}.
\end{equation*}

The next example shows that the asymptotic Lipschitz constant and the slope do not always coincide.
\begin{example}\label{ex: lip differt to lipa}
    Let $X=\{0\}\cup\bigcup_{n\in\N}[\frac{1}{n},\frac{1}{n}+\frac{1}{n^2}]$, $\di=|\cdot|$ be the standard Euclidean distance and $g\colon X\to \R$ defined by
    \begin{equation*}
        g(x)=    
    \begin{cases}
        0 &\mbox{ if } x=0,\\
        x - \frac{1}{n} &\mbox{ if } x\in [\frac{1}{n},\frac{1}{n}+\frac{1}{n^2}].
    \end{cases}
    \end{equation*}
    We are going to show that $\lip_a(g,0)>\lip(g,0)$. Indeed, let $x\in [\frac{1}{n},\frac{1}{n}+\frac{1}{n^2}]$, then
    \begin{equation*}
        \frac{|g(x)-g(0)|}{|x|}\leq \frac{g(\frac{1}{n}+\frac{1}{n^2})-g(0)}{\frac{1}{n}+\frac{1}{n^2}}=\frac{1}{n+1}.
    \end{equation*}
    Therefore, we have $\lip(g,0)=0$. On the other hand, we have
    \begin{equation*}
  \textstyle\Lip(g,[\frac{1}{n},\frac{1}{n}+\frac{1}{n^2}]) \geq \frac{g(\frac{1}{n}+\frac{1}{n^2})-g(\frac{1}{n})}{\frac{1}{n}+\frac{1}{n^2}-\frac{1}{n}}=1,
    \end{equation*}
    which implies $\lip_a(g,0)\geq 1>0=\lip(g,0)$.
\end{example}
For every Lipschitz function $g:X\to \R$ and every $x\in X$, it is possible to show that 
\[\lip_a(g,x)\geq \lip^{*}(g,x)\geq \lip(g,x) \geq \lip^{\pm}(g,x) \]
and the first inequality is an equality on a length metric space (see \cite{ACD}*{Proposition 12} for a proof). Here $\lip^{*}(g,x)$ is the upper semicontinuous envelope of the slope of $g$.

The next example shows that the dependence on $\e$ in Theorems \ref{t: mainthm} and \ref{t: +- slope} cannot be dropped.

\begin{example}\label{ex: e-dependence}
    Let $(X,\di)=([-1,2],|\cdot|)$, $C=[-1,0]\cup [1,2]$ and $g\colon C\to \R$ defined by 
    \begin{equation*}
        g(x)=    
    \begin{cases}
        0 &\mbox{ if } x\in [-1,0],\\
        1 &\mbox{ if } x\in [1,2].
    \end{cases}
    \end{equation*}
The function $g$ is $1$-Lipschitz and $\lip(g,x)=\lip^+(g,x)=\lip^-(g,x)=0$, for every $x\in C$. Moreover, the unique $1$-Lipschitz extension to the whole $X$ does not preserve the slope and the ascending slope at $x=0$, and the descending slope at $x=1$.
\end{example}

\section{Proof of the main results}\label{sec: proof}
In order to preserve the slope at each point of $C$, we slightly modify the construction of Di Marino, Gigli, and Pratelli in \cite{DGP}. Even though Steps 1-4 of the proof are essentially the same as those in \cite{DGP}, we have included them for completeness.
\begin{proof}[Proof of Theorem \ref{t: mainthm}]
Let $\e>0$ and without loss of generality we assume $L>0$ and $\e\leq L$. Let $\{\e_k\}_{k\in \ZZ}$ be a sequence such that

\begin{itemize}
\item $\e_k> 0$ for every $k\in\mathbb Z$; 
\item $k\mapsto\frac{\e_{k-1}}{\e_k} $ is increasing and $\frac{\e_{k-1}}{\e_k}\to 0$ as $k \to -\infty$;
\item for every $k \in \mathbb{Z}$ it holds
\begin{equation*}
\frac{ \e_{k-1} }{\e_k} \leq \frac \e{3(L+\e)}.
\end{equation*}
\end{itemize}

We consider, for $x\in C$, $S_k^c(x)\coloneqq \Lip(g,C\cap B_{\e_k}(x),x)$ and the penalization function $ \pen_x : [0,\infty) \to [0, \infty)$, defined as the only continuous function such that
\begin{equation*}
\pen_x(0)=0 \qquad \qquad \pen'_x(t) = S_k^c(x) + 3 L \frac{ \e_{k-2}}{\e_{k-1}} \qquad \text{ for }\e_{k-2} < t < \e_{k-1}.
\end{equation*}
Notice that the function $\pen_x$ is convex and Lipschitz, for each $x\in C$. We remark that \cite{DGP} uses the function $S_k(x)\coloneqq\Lip(g,C\cap B_{\e_k}(x))$ in place of $S_k^c(x)$.

Then we define:
\begin{equation*}\label{eqn:deff}
\begin{split}
\phi_x (y)&: = g(x) + \pen_x ( \di(x,y))\qquad\forall x\in C,\ y\in X, \\
 f(y)&:= \inf_{x \in C} \left\{ \phi_x(y) \right\}\qquad\qquad\qquad\qquad\ \ \forall  y\in X.
\end{split}
\end{equation*}

\noindent\textbf{Step 1.} We claim that
\begin{equation}
\label{eq:claim1}
\text{$\phi_x$ is $(L+\e)$-Lipschitz for every $x\in C$.}
\end{equation}
Indeed, the claim immediately follows by this simple estimate:
\[
\pen'_x(t) = S^c_k(x) + 3 L \frac{ \e_{k-2}}{\e_{k-1}} \leq L + 3 L  \frac {\e}{ 3(L+\e)} \leq L+ \e, \qquad \forall k\in\ZZ \mbox{ and } t \in (\e_{k-2}, \e_{k-1}).
\]

\vspace{4mm}

\noindent\textbf{Step 2.} We claim that
\begin{equation}
\label{eq:claim2}
\text{whenever $x,y \in C$ and  $\e_{k-1}\leq\di(x,y)\leq\e_{k}$, we have $\phi_x(y) \geq g(y) + \e_{k-2} L$. }
\end{equation}
In fact, using the definition of $S^c_k(x)$ it holds $g(x) \geq g(y)- S^c_k(x) \, \di(x,y)$, while $\pen_x(\di(x,y)) \geq \int_{\e_{k-2}}^{\di(x,y)} \pen'_x(t)\, \de t$ so that:
\begin{align*}
\phi_x(y)& =g(x) + \pen_x(\di(x,y)) \\
&\geq g(y)- S^c_k(x) \, \di(x,y)+ \int_{\e_{k-2}}^{\di(x,y)} \pen'_x(t)\, \de t  \\
\qquad\qquad&\geq  g(y) - S_k^c(x) \, \di(x,y) +  (\di(x,y) - \e_{k-2}) \, \big(S^c_k(x) + 3 L \frac{ \e_{k-2}}{\e_{k-1}}\big) \\
\qquad & \geq  g(y) - \e_{k-2} L + 3L (\e_{k-1} - \e_{k-2}) \, \frac{ \e_{k-2}}{\e_{k-1}}  \\
 &=  g(y) + \e_{k-2} L + L (\e_{k-1} - 3\e_{k-2}) \, \frac{ \e_{k-2}}{\e_{k-1}}\geq  g(y) + \e_{k-2} L,
\end{align*}
where we have used the properties of the sequence $\{\e_k\}_{k\in \ZZ}$.

\vspace{4mm}

\noindent \textbf{Step 3.} We claim that
\begin{equation}
\label{eq:claim3}
\text{ $f$ is an $(L+\e)$-Lipschitz extension of $g$.}
\end{equation}
Indeed, Step 2 implies that  $\phi_x(y) \geq g(y)$ for every $x,y \in C$, and thus $f(y) \geq g(y)$. On the other hand, $\phi_y(y)=g(y)$ for any $y\in C$, which implies $f(y) \leq g(y)$. Therefore $f$ is an extension of $g$. The fact that $f$ is an $(L+\e)$-Lipschitz function follows directly from \cite{DGP}*{Lemma 2.1} combined with Step 1.

\vspace{4mm}

\noindent \textbf{Step 4.}  We claim that
\begin{equation}
\label{eq:claim4}
\forall\bar x\in C,\ k\in\mathbb Z\quad\text{we have}\quad f(y)  = \inf_{x\in C \cap B_{\e_{k}}(\bar{x}) }   \phi_x(y)  \qquad \forall y \in B_{\e_{k-2}}(\bar{x}).
\end{equation}
To prove it, we will show that for $\bar x,k,y$ as above and $x\in C$ with $\di(x,\bar x)\geq \e_k$ it holds
\begin{equation}
\label{eq:claim44}
\phi_x(y)\geq f(y)+\e_{k-1}\frac L3.
\end{equation}
Since $f$ is $(L+\e)$-Lipschitz, we have
\[
f(y) \leq g(\bar{x})+ \e_{k-2} (L+\e).
\]
On the other hand, by the previous steps we also know that
$$\phi_x(y) \geq \phi_x(\bar{x}) - \e_{k-2}(L+\e) \geq g(\bar x) + \e_{k-1} L - \e_{k-2}(L+\e),$$
which allows to conclude. 

\vspace{4mm}

\noindent \textbf{Step 5.} Let $j\in \ZZ$ and $\bar{x},z\in C$ be such that $\e_{j-1}\leq\di(\bar{x},z)<\e_j$. We define 
\begin{equation}\label{def: psi}
     \psi_{\bar{x},z}:X\to \R,\qquad   \psi_{\bar{x},z}(y)\coloneqq\begin{cases}
            g(z) &\mbox{ if } \di(z,y)\leq \e_{j-2},\\
            g(z) + \frac{g(\bar{x})-g(z)}{\left(1-\frac{\e_{j-2}}{\di(\bar{x},z)}\right)\di(\bar{x},z)} (\di(y,z)-\e_{j-2})& \mbox{ otherwise.}
        \end{cases}
    \end{equation} 

We claim that,
    \begin{equation}\label{eq:claim5}
       \mbox{whenever } g(\bar{x})>g(z), \mbox{ we have}  \,\,\phi_z(y)\geq \psi_{\bar{x},z}(y) \quad \forall y\in X.
    \end{equation}
 We first observe that the values $\phi_z(y)$ and $\psi_{\bar{x},z}(y)$ depend only on the distance between $y$ and $z$. Then, it is enough to show that $\varphi_z(t)\geq \zeta_{\bar{x},z}(t)$ for every $t\in [0,\infty)$, where 
        \begin{equation*}
            \varphi_{z}(t)\coloneqq g(z) +\pen_z(t)
        \end{equation*}
    and 
        \begin{equation*}
        \zeta_{\bar{x},z}(t)\coloneqq\begin{cases}
            g(z) &\mbox{ if } t\leq \e_{j-2},\\
            g(z) + \frac{g(\bar{x})-g(z)}{\left(1-\frac{\e_{j-2}}{\di(\bar{x},z)}\right)\di(\bar{x},z)} (t-\e_{j-2})& \mbox{ otherwise }.
        \end{cases}
    \end{equation*}
    We observe that for any $y\in X$ we have $\phi_z(y)\geq g(z)$, so $\phi_z(y)\geq \zeta_{\bar{x},z}(y)$ for every $y$ with $\di(z,y)\leq \e_{j-2}$. Hence $\varphi_z(t)\geq \zeta_{\bar{x},z}(t)$ for $t\in [0,\e_{j-2}]$.\\
    To conclude the proof we are going to show that $\varphi'_z(t)>\zeta'_{\bar{x},z}(t)$ for every $t\in(\e_{k-2},\e_{k-1})$ and any index $\ZZ\ni k\geq j$. 
    We have 
    \begin{equation*}
        \varphi_z'(t)=\pen_z'(t)\geq S_k^c(z) +3L\frac{\e_{j-2}}{\e_{j-1}}\geq \frac{g(\bar{x})-g(z)}{\di(\bar{x},z)} + 3L\frac{\e_{j-2}}{\e_{j-1}}
    \end{equation*}
    while 
      \begin{equation*}
        \zeta_{\bar{x},z}'(t)=\frac{g(\bar{x})-g(z)}{(1-\frac{\e_{j-2}}{\di(\bar{x},z)})\di(\bar{x},z)}. 
    \end{equation*}
    It holds
    \begin{equation*}
    \begin{split}
  &\frac{g(\bar{x})-g(z)}{(1-\frac{\e_{j-2}}{\di(\bar{x},z)})\di(\bar{x},z)}-        \frac{g(\bar{x})-g(z)}{\di(\bar{x},z)} = \frac{g(\bar{x})-g(z)}{\di(\bar{x},z)} \left(\frac{1}{(1-\frac{\e_{j-2}}{\di(\bar{x}-z)})}-1\right)\\ &\leq L\left(\frac{\e_{j-2}}{\di(\bar{x},z)-\e_{j-2}}\right)
        \leq L \left(\frac{\e_{j-2}}{\e_{j-1}-\e_{j-2}}\right)= L \frac{e_{j-2}}{e_{j-1}}\frac{1}{1-\frac{e_{j-2}}{e_{j-1}}}< 3L \frac{e_{j-2}}{e_{j-1}},  
    \end{split}
    \end{equation*}
   where the last inequality follows from
   \begin{equation*}
       \frac{\e_{j-2}}{\e_{j-1}}\leq\frac{\e}{3(L+\e)}<\frac{2}{3}\quad \quad\mbox{ for every } j\in\ZZ.
   \end{equation*}
   So we obtain the desired inequality
       \begin{equation*}
    \begin{split}
  \zeta'_{\bar{x},z}(t)=\frac{g(\bar{x})-g(z)}{(1-\frac{\e_{j-2}}{\di(\bar{x},z)})\di(\bar{x},z)}<        \frac{g(\bar{x})-g(z)}{\di(\bar{x},z)} + 3L \frac{e_{j-2}}{e_{j-1}} \leq \varphi_{z}'(t), \quad \forall t\in(\e_{k-2},\e_{k-1}), k\geq j.
    \end{split}
    \end{equation*}

    \vspace{4mm}
    
\noindent \textbf{Step 6.}
We claim that
\begin{equation*}
    \lip(g,x)=\lip(f,x) \,\,\, \mbox{ for every } x\in C.
\end{equation*}

    \noindent
    Let $\bar{x}\in C$. If $\bar{x}$ is isolated in $X$ by definition $\lip(f,\bar{x})=\lip(g,\bar{x})=0$. Otherwise, let $\{y_n\}_{n\in\N}\subset X\setminus\{\bar{x}\}$ be such that $y_n\to \bar{x}$ as $n\to +\infty$ and 
    \begin{equation}\label{eq: lipfbiggerlipg}
        \lim_{n\to +\infty} \frac{|f(y_n)-f(\bar{x})|}{\di(y_n,\bar{x})}=\lip(f,\bar{x})\geq \lip(g,\bar{x}),
    \end{equation}
    where the inequality is an immediate consequence of the fact that $f$ extends $g$.
    Let $n\in\N$ and let $k_n\in \>Z$ be such that $e_{k_n-3}\leq\di(\bar{x},y_n)<\e_{k_n-2}$. 
    Notice that $k_n\to -\infty$ as $n\to +\infty$. We proceed by distinguishing two cases. Suppose that $f(\bar{x})\leq f(y_n)$, then, by the definition of the function $f$ and since $\pen_{\bar{x}}$ is a convex function with $\pen_{\bar{x}}(0)=0$, we have
    \begin{equation}\label{eq: upperbound}
    \begin{split}
            \frac{|f(y_n)-f(\bar{x})|}{\di(\bar{x},y_n)}&=\frac{f(y_n)-f(\bar{x})}{\di(\bar{x},y_n)}\leq \frac{\phi_{\bar{x}}(y_n)-f(\bar{x})}{\di(\bar{x},y_n)}\\
            &=\frac{\pen_{\bar{x}}(\di(\bar{x},y_n))}{\di(\bar{x},y_n)}\leq S_{k_n-1}^{c}(\bar{x}) +3L\frac{\e_{k_n-3}}{\e_{k_n-2}}.
    \end{split}
    \end{equation}
    Let us now suppose $f(y_n)<f(\bar{x})$. By \eqref{eq:claim4}, there exists $z_n\in C$ such that $\di(z_n,\bar{x})<\e_{k_n}$ and
    \begin{equation*}
        f(y_n)\geq \phi_{z_n}(y_n) - \eta_n\cdot\di(y_n,\bar{x}),
    \end{equation*}
    where $\eta_n>0$ is chosen in such a way that 
    \begin{itemize}
        \item $\eta_n<\frac{1}{n}$;
        \item $f(y_n)+\eta_n\di(y_n,\bar{x})<f(\bar{x})=g(\bar{x})$. 
    \end{itemize}
    Suppose that $\e_{j_n-1}\leq \di(\bar{x},z_n)<\e_{j_n}$ for some $j_n\leq k_n$, so that $j_n\to -\infty$ as $n\to+\infty$. Then we use \eqref{eq:claim5} (with $y=y_n$, $z=z_n$ and $j=j_n$), distinguishing two cases:
    if $\di(y_n,z_n)\leq \e_{j_n-2}$ we have
    \begin{equation}\label{eq: lowerbound1case}
    \begin{split}
            \frac{|f(y_n)-f(\bar{x})|}{\di(\bar{x},y_n)}&=\frac{f(\bar{x})-f(y_n)}{\di(\bar{x},y_n)}\leq \frac{f(\bar{x})-\phi_{z_n}(y_n)}{\di(\bar{x},y_n)}+\eta_n\\
            &\leq\frac{g(\bar{x})-g(z_n)}{\di(\bar{x},y_n)}+\eta_n\leq \frac{g(\bar{x})-g(z_n)}{\di(\bar{x},z_n)-\di(y_n,z_n)}+\eta_n\\
            &=\frac{g(\bar{x})-g(z_n)}{\di(\bar{x},z_n)}\frac{\di(\bar{x},z_n)}{\di(\bar{x},z_n)-\di(y_n,z_n)}+\eta_n\leq S_{j_n}^c(\bar{x})\frac{1}{1-\frac{\e_{j_n-2}}{\e_{j_n-1}}}+\eta_n;
    \end{split}
    \end{equation}
otherwise, in the case $\di(y_n,z_n)>\e_{j_n-2}$ it holds
\begin{equation}\label{eq: lowerbound2case}
    \begin{split}
            &\frac{|f(y_n)-f(\bar{x})|}{\di(\bar{x},y_n)}=\frac{f(\bar{x})-f(y_n)}{\di(\bar{x},y_n)}\leq \frac{f(\bar{x})-\phi_{z_n}(y_n)}{\di(\bar{x},y_n)}+\eta_n\\
    &\leq\frac{g(\bar{x})-g(z_n)}{\di(\bar{x},z_n)}\left[\frac{\di(\bar{x},z_n)}{\di(\bar{x},y_n)}-\frac{\di(y_n,z_n)-\e_{j_n-2}}{(1-\frac{\e_{j_n-2}}{\di(\bar{x},z_n)})\di(\bar{x},y_n)}\right]+\eta_n\\
    &=\frac{g(\bar{x})-g(z_n)}{\di(\bar{x},z_n)}\left[\frac{\di(\bar{x},z_n)-\di(y_n,z_n)}{(1-\frac{\e_{j_n-2}}{\di(\bar{x},z_n)})\di(\bar{x},y_n)}\right]+\eta_n\\
    &\leq S_{j_n}^c(\bar{x})\left[\frac{\di(\bar{x},y_n)}{(1-\frac{\e_{j_n-2}}{\di(\bar{x},z_n)})\di(\bar{x},y_n)}\right]+\eta_n
            \leq S_{j_n}^c(\bar{x})\frac{1}{1-\frac{\e_{j_n-2}}{\e_{j_n-1}}}+\eta_n
    \end{split}
    \end{equation}
Putting together \eqref{eq: upperbound}, \eqref{eq: lowerbound1case} and \eqref{eq: lowerbound2case}, we can infer
    \begin{align*}
        \lip(f,\bar{x})&=\lim_{n\to +\infty} \frac{|f(y_n)-f(\bar{x})|}{\di(y_n,\bar{x})}\\
          &\leq \lim_{n\to +\infty} \max\{S_{k_n-1}^{c}(\bar{x}) +3L\frac{\e_{k_n-3}}{\e_{k_n-2}},S_{j_n}^c(\bar{x})\frac{1}{1-\frac{\e_{j_n-2}}{\e_{j_n-1}}}+\eta_n\}= \lip(g,\bar{x}),
    \end{align*}
    which combined with \eqref{eq: lipfbiggerlipg} concludes the proof of the first part. 
    
    The part of the statement concerning bounded functions (and functions with bounded support) follows in the same way as in \cite{DGP}.
\end{proof}

Taking advantage of some of the arguments contained in the proof of Theorem \ref{t: mainthm}, we conclude the article proving Theorem \ref{t: +- slope}.

\begin{proof}[Proof of Theorem \ref{t: +- slope}]
Let us prove the result for the descending slope. 
    Let $\{\e_k\}_{k\in\ZZ}$ be a sequence of positive numbers defined in the same way as at the beginning of the proof of Theorem \ref{t: mainthm}. Let $x\in C$, we define $S^-_k(x)\coloneqq \Lip^-(g,C\cap B_{\e_k},x)$ and $ \pen_x : [0,\infty) \to [0, \infty)$, defined as the only continuous function such that
\begin{equation*}
\pen_x(0)=0 \qquad \qquad \pen'_x(t) = S_k^-(x) + 3 L \frac{ \e_{k-2}}{\e_{k-1}} \qquad \text{ for }\e_{k-2} < t < \e_{k-1}.
\end{equation*}
The function $\pen_x$ is convex and Lipschitz, for each $x\in C$. We define:
\begin{equation*}
\begin{split}
\phi_x (y)&: = g(x) - \pen_x ( \di(x,y))\qquad\forall x\in C,\ y\in X \\
 f(y)&:= \sup_{x \in C} \left\{ \phi_x(y) \right\}\qquad\qquad\qquad\qquad\ \ \forall  y\in X.
\end{split}
\end{equation*}
For each $x$, the function $\phi_x$ is $(L+\e)$-Lipschitz. It is enough to proceed as in the proof of Theorem \ref{t: mainthm} by observing that for each $x\in C$ and $k\in\ZZ$
\begin{equation*}
    S^-_k(x)\leq L,\,\,\,\mbox{and} \,\,\,3L\frac{\e_{k-2}}{\e_{k-1}}\leq \e.
\end{equation*}
Hence, by \cite{DGP}*{Lemma 2.1} the function $f$ is $(L+\e)$-Lipschitz. In order to prove that the function $f$ extends the function $g$, following the same idea of step 3 of the proof of Theorem \ref{t: mainthm} and since $\phi_y(y)=g(y)$ for every $y\in C$, it is enough to show that $\phi_x(y)\leq g(y)$ for each $x,y\in C$. To prove it, we fix $x,y\in C$ such that $\di(x,y)\in [\e_{k-1},\e_k]$ and notice that $g(x)\leq g(y) + S^-_k(x)\di(x,y)$. With analogous computations as in the proof of step 2 of Theorem \ref{t: mainthm}, we have
\begin{align*}
\phi_x(y)& =g(x) - \pen_x(\di(x,y)) \leq g(y)+ S^-_k(x) \, \di(x,y)- \int_{\e_{k-2}}^{\di(x,y)} \pen'_x(t)\, \de t  \\
 &\leq g(y) - \e_{k-2} L - L (\e_{k-1} - 3\e_{k-2}) \, \frac{ \e_{k-2}}{\e_{k-1}}\leq  g(y).
\end{align*}
Finally, it remains to show that 
\begin{equation*}
    \lip^-(g,x)=\lip^-(f,x),
\end{equation*}
for each $x\in C$. Let $\bar{x}\in C$ which is not isolated in $X$, otherwise $\lip^-(g,\bar{x})=\lip^-(f,\bar{x})=0$. Let $\{y_n\}_{n\in\N}\subset X\setminus\{\bar{x}\}$ such that $y_n\to\bar x$ as $n\to +\infty$ and 
\begin{equation*}
    \lim_{n\to +\infty} \frac{(f(y_n) - f(\bar x))_-}{\di(\bar x,y_n)}=\lip^-(f,\bar x) \geq \lip^-(g,\bar x).
\end{equation*}
For every $n\in \N$, if $f(\bar x)\geq f(y_n)$ and $\di(\bar x, y_n)\in [\e_{k_n-2},\e_{k_n-1})$, we have
\begin{equation*}
    \frac{(f(y_n) - f(\bar x))_-}{\di(\bar x, y_n)}=\frac{f(\bar x) - f(y_n)}{\di(\bar x, y_n)}\leq \frac{f(\bar x) - \phi_{\bar x}(y_n)}{\di(\bar x, y_n)}\leq \frac{\pen_{\bar x}(\di (\bar x,y_n))}{\di(\bar x,y_n)}\leq S_{k_n}^-(\bar x)+3L\frac{\e_{k_n-2}}{\e_{k_n-1}}.
\end{equation*}
Notice that the same inequality 
\[ \frac{(f(y_n) - f(\bar x))_-}{\di(\bar x, y_n)}\leq S_{k_n}^-(\bar x)+3L\frac{\e_{k_n-2}}{\e_{k_n-1}}\]
trivially holds if $f(\bar x)< f(y_n)$. By taking the limit on both sides, noticing that $k_n\to -\infty$ as $n\to +\infty$, we obtain the reverse inequality
\begin{equation*}
    \lim_{n\to +\infty}\frac{(f(y_n) - f(\bar x))_-}{\di(\bar x, y_n)}\leq \lip^-(g,x).
\end{equation*}
This concludes the proof for the descending slope. 

To extend $g$ while preserving the ascending slope, it is sufficient to take $f:X\to \R$ defined as $f\coloneqq -\tilde f$, where $\tilde f:X\to \R$ is an extension of $-g$ that preserves the descending slope. 

The second part of the statement follows in the same way as in \cite{DGP}.
\end{proof}

\paragraph{\em\bfseries Acknowledgments}
The research of the authors has been partially supported by the GNAMPA (INdAM -- Istituto Nazionale di Alta Matematica). NDP is supported by the INdAM-GNAMPA Project ``Proprietà qualitative e regolarizzanti di equazioni ellittiche e paraboliche'', codice CUP $\#E5324001950001\#$.



\begin{thebibliography}{WW}

\bibitem{ACD} L.~Ambrosio, M.~Colombo, and S.~Di Marino, \emph{Sobolev spaces in metric measure spaces: reflexivity and lower semicontinuity of slope}, Adv. Stud. Pure Math., \textbf{67}, Mathematical Society of Japan, [Tokyo], (2015), 1–58.

\bibitem{AGScalc} L.~Ambrosio, N.~Gigli, and G.~Savar\'e, \emph{Calculus and heat flow in metric measure spaces and applications to spaces with Ricci bounds from below}, Invent. Math. \textbf{195} (2014), no. 2, 289--391.

\bibitem{AGS} L.~Ambrosio, N.~Gigli, and G.~Savar\'e, \emph{Gradient flows in metric spaces and in the space of probability measures}, Second edition
Lectures Math. ETH Z\"urich, Birkh\"auser Verlag, Basel, 2008. x+334 pp.

\bibitem{AGSiber} L.~Ambrosio, N.~Gigli, and G.~Savar\'e, \emph{Density of Lipschitz functions and equivalence of weak gradients in metric measure spaces}, Rev. Mat. Iberoam. \textbf{29} (2013), no. 3, 969–-996.

\bibitem{Cheeger} J.~Cheeger, \emph{Differentiability of Lipschitz functions on metric measure spaces}, Geom. Funct. Anal., Vol. \textbf{6} (1999), 428--517

\bibitem{CG} J.~Czipszer, L.~Geh\'er, \emph{Extension of functions satisfying a Lipschitz condition}, Acta Math. Acad. Sci. Hungar. \textbf{6} (1955), 213--220.

\bibitem{DeBeVe} C.A.~De Bernardi, L.~Vesel\'y, \emph{On extension of uniformly continuous quasiconvex functions}, Proc. Amer. Math. Soc. \textbf{151} (2023), 1705--1716.

\bibitem{DGP} S.~Di Marino, N.~Gigli, and A.~Pratelli, \emph{Global Lipschitz extension preserving local constants}, Rend. Lincei Mat. Appl. \textbf{31} (2020), 757--765.

\bibitem{Gu25} V.~Gutev, \emph{On real-valued functions of Lipschitz type}, Expo. Math. \textbf{43} (2025), 125701.

\bibitem{KLR} J.~Koivu, D.~Lu\v ci\'c, and T.~Rajala, \emph{Approximation by BV-extension sets via perimeter minimization in metric spaces}, International Mathematics Research Notices \textbf{11} (2024), 9359--9375.

\bibitem{McShane}
E.J.~McShane, {\em Extension of range of functions}, Bull. Amer. Math.
  Soc. \textbf{40} (1934), 837--842.

\bibitem{Milman98}
V.A.~Mil\cprime man, {\em Lipschitz extensions of linearly bounded
  functions}, Mat. Sb. \textbf{189} (1998), 67--92.

\bibitem{W34}
H.~Whitney, {\em Analytic extensions of differentiable functions defined in closed sets}, Trans. Amer. Math. Soc. \textbf{36} (1934), 63--89.

\end{thebibliography}
\end{document}